\documentclass[10pt]{amsart}
\usepackage{amssymb}

\newtheorem{theorem}{Theorem}

\newtheorem{claim}{Claim}
\newtheorem{lemma}{Lemma}
\theoremstyle{definition}
\newtheorem{definition}[theorem]{Definition}
\newtheorem{example}{Example}
\newtheorem{remark}{Remark}

\def\MARU#1{{\ooalign{\hfil#1\/\hfil\crcr\raise.167ex\hbox{\mathhexbox20D}}}}


\newenvironment{namelist}[1]{%
\begin{list}{}
  {
   \settowidth{\labelwidth}{#1}
   \setlength{\leftmargin}{2.5\labelwidth}}
}{%
\end{list}}


\begin{document}

\bigskip

\title{Galois Group at Galois Point for genus-one Curve}
\author{Mitsunori Kanazawa and Hisao Yoshihara}
\address{Graduate School of Science and Technology,\\
 Niigata University Niigata 950-2181, Japan and \\
 Department of Mathematics, Faculty of Science, \\ 
Niigata University, Niigata 950-2181, Japan}
\email{mi\_{}kana@js3.so-net.ne.jp and yosihara@math.sc.niigata-u.ac.jp}
\keywords{Galois point, genus-one curve, Galois group}
\maketitle

\maketitle
\begin{abstract}
We show all the possible structures of finite subgroups of the automorphism groups of elliptic curves. 
Using the result, we determine every Galois group $G$ at outer Galois point for genus-one curve with singular points. 
In particular, if $G$ is abelian, then its order is at most nine.  However if not so, the order is unbounded. 
Furthermore, we give every defining equation of the curve 
with the Galois point in the case where $G$ is abelian.  
\end{abstract}
\bigskip

\section{Introduction}
The purposes of this article are (1) to determine every possible Galois group $G$ of a Galois point for a genus-one curve with a singular point 
and (2) to give the defining equation of the curve when $G$ is abelian. 
First, let us recall the definition of (outer) Galois point (\cite{my}, \cite{y1}). 

Let $k$ be the ground field of our discussion.  We assume it to be an algebraically closed field of characteristic zero. 
Let $C$ be an irreducible projective plane curve of degree $d$ ($\geq 3$) and $k(C)$ 
the function field. Let $P$ be a point in the plane $\mathbb P^2 \setminus C$ and consider the projection 
from $P$ to $\mathbb P^1$, 
$\pi_P : \mathbb P^2 \dasharrow \mathbb P^1$. Restricting $\pi_P$ to $C$, we get a surjective morphism 
$\bar \pi_P : C \longrightarrow \mathbb P^1$, which induces a finite extension of fields 
$\bar \pi_P^* : k(\mathbb P^1) \hookrightarrow k(C)$. 
If the extension is Galois, we call $P$ a (outer) Galois point for $C$. 
Let $G=G_P$ be the Galois group $\mathrm{Gal}(k(C)/\bar \pi_P^*(k(\mathbb P^1)))$. 
We call $G$ the Galois group at $P$. 
By definition each element of $G$ induces a birational transformation of $C$ over the projective line $\mathbb{P}^1$. 
If $C$ is smooth, then the element is an automorphism of $C$.  Moreover, if $d \geq 4$, then it can be extended to a projective 
transformation of $\mathbb{P}^2$ and $G$ turns out to be a cyclic group (\cite{y1}).
 However, in case $C$ has a singular point, several new phenomena occur, for examples, the group is not necessarily cyclic, 
and the element of $G$ cannot necessarily be extended to a birational transformation of $\mathbb P^2$ (cf. \cite{y2}).  
It seems interesting to determine Galois group when $C$ has a singular point (cf. \cite{m}). 
 
In this article we assume $k=\mathbb C$; the field of complex numbers. 
By a {\it genus-one curve} we mean it is an irreducible plane curve whose smooth model has the genus one.
In the first half we study finite subgroups of the automorphism groups of smooth genus-one curves, i.e., elliptic curves, and then 
determine the Galois groups of the Galois points for such curves. 
The results are stated in Section 2.  
In the latter half we give the defining equations of the curves  
when the groups are abelian. 
Similar study for space elliptic curves and abelian surfaces have bean done in \cite{y3} and \cite{y4}, respectively.

Note that if the characteristic of the ground field $k$ is positive, then many new phenomena occur and there exist lots of different facts.   
For the recent development of positive characteristic case, see \cite{f}. 

We use the following notation and convention, where $n$ is a positive integer:

\begin{namelist}{3}
\item[$\cdot$]$E$ : an elliptic curve
\item[$\cdot$]$\mathcal L=\mathbb Z+\mathbb Z \omega$ : the lattice defining $E$, where $\Im \omega >0$
\item[$\cdot$]$A(E)$ : the automorphism group of $E$ as a variety
\item[$\cdot$]$e_n:=\exp(2\pi \sqrt{-1}/n)$ 
\item[$\cdot$]$Z_n:=\mathbb Z/n\mathbb Z$ 
\item[$\cdot$]$D_n$ : the dihedral group of order $2n$
\item[$\cdot$]$|G|$ : the order of a finite group $G$
\item[$\cdot$]$\langle \sigma_1, \cdots, \sigma_n \rangle$ : the subgroup of some group generated by $\sigma_1, \cdots, \sigma_n$ 
\end{namelist}

\medskip

\medskip

\section{Statement of Results}
Let $E$ be the elliptic curve $\mathbb C/\mathcal L$ and $A(E)$ the automorphism group of $E$ as a variety. 
For an element $\sigma \in A(E)$, there exists a lift $\widetilde{\sigma} : \mathbb C \longrightarrow \mathbb C$ such that 
 $\widetilde{\sigma}z=\alpha z+\beta$, where 
$z \in \mathbb C$, $\alpha=\alpha(\sigma)$ and $\beta=\beta(\sigma)$ are constants in  $\mathbb C$. 
Then we have $\alpha \mathcal L=\mathcal L$. 
Note that $\alpha^n=1$, where  $n=1,2,3,4$ or $6$. 
Fixing the representation, we define 
\[
T(E)=\{\ \sigma \in A(E) \ | \ \alpha(\sigma)=1 \  \} 
\]
and
\[ 
A(E)_0=\{\ \alpha(\sigma) \in \mathbb C \ | \ \beta(\sigma)=0 \  \}. 
\]
It is well-known that $A(E)_0$ is a cyclic group of order $2, 4$ or $6$. 
We have the following exact sequence of the groups:
\[
1 \longrightarrow T(E) \stackrel{i}{\longrightarrow} A(E) \stackrel{\alpha}{\longrightarrow} A(E)_0 \longrightarrow 1,  
\] 
where $i$ is the natural injection and $\alpha$ is defined as above. 
Since $A(E)_0$ can be viewed as a subgroup of $A(E)$, the $A(E)$ is a semi-direct product $T(E) \rtimes A(E)_0$. 
In case $G$ is a subgroup of $A(E)$, put $G_T=G \cap T(E)$ and $G_0=\alpha(G)$. Restricting the sequence onto $G$, we get the following exact sequence:  
\[
1 \longrightarrow G_T \longrightarrow G \longrightarrow G_0 \longrightarrow 1.  
\]
The group $G$ is a semi-direct product $G_T \rtimes G_0$. This is a well-known fact, in this article we will find more 
detailed property, i.e., the possibility of $|G_T|$ and the way of action of $G_0$ onto $G_T$. 
This is a classical question, but there may be no 
appropriate reference, so we present here for the completeness. 
In order to state the theorems we prepare the following terms. 

\begin{definition}\label{3}
A finite group $G$ is called a {\it bidihedral group} if it is generated by the elements $a,b$ and $c$ satisfying the following conditions (1), (2) and (3):

\begin{enumerate}
\item The orders of $a$, $b$ and $c$ are $2$, $m$ and $n$ respectively. 
\item $ n \geq m \geq 2$ and $n \geq 3$
\item $ aba=b^{-1}, \ aca=c^{-1}$ and $bc=cb $ 
\end{enumerate}

We denote this group by $BD_{mn}$. 
\end{definition}

\begin{definition}\label{5}
A finite non-abelian group $G$ of order $m^2kl$ is called an {\it exceptional elliptic group} if it satisfies the  
 following conditions (1), (2) and (3):
\begin{enumerate}
\item[(1)] $G$ is the semi-direct product $H \rtimes K$ with some action of $K$ onto $H$ , where $K \cong Z_l$ and $H$ is a  
normal subgroup of $G$ such that $H \cong Z_k$ or $Z_m \oplus Z_{mk}$. 
\item[(2)] $l=3,4$ or $6$, and $m$ is any positive integer.  
\item[(3)] The positive integer $k$ is a factor of $h^2+\varepsilon h+1$, where $h$ is an integer and $\varepsilon=0$ (resp. $1$) 
corresponding to $l=4$ (resp. $3$ or $6$). 
\end{enumerate}
We denote these groups by $E(k, l)$ and $E(m,k,l)$ if $H$ has one and two generators, respectively. 
\end{definition}

Concerning the possibility of $k$ in (3) of Definition \ref{5} we note the following. 

\begin{remark}
In case $k \ne 1$, each prime factor $p$ of $k$ is as follows:  
 \begin{enumerate}
 \item If $l=3$ or $6$, then $p=3$ or $p \equiv 1 \pmod{3}$. 
 \item If $l=4$, then $p=2$ or $p \equiv 1 \pmod{4}$. 
 \end{enumerate}
\end{remark}

Using the above definitions, we can state all the possible structures of the finite subgroups $G$ as follows.

\begin{theorem}\label{20}
A finite group $G$ can be a subgroup of $A(E)$ for some elliptic curve $E$ if and only if $G$ is isomorphic to one of the 
following{\rm :}  

\begin{enumerate}
\item[(1)] abelian case: 
\begin{enumerate}
\item[(1.1)] $Z_m$ {\rm (}$m \geq 1${\rm)} or $Z_m \oplus Z_{mn} $ \ {\rm(}$m \geq 2,\ n \geq 1${\rm)} 
\item[(1.2)] $Z_2$, \ ${Z_2}^{\oplus2}$,\ ${Z_2}^{\oplus3}$,\ $Z_3$, \ ${Z_3}^{\oplus2}$, \ 
$Z_4$, \ $Z_2\oplus Z_4$ \ or \ $Z_6$ 
\end{enumerate}
\item[(2)] non-abelian case:  
\begin{enumerate}
\item[(2.1)] $D_n$ or $BD_{mn}$ $(n \geq 3)$  
\item[(2.2)] $E(k, l)$ or $E(m,k,l)$ 
\end{enumerate}
\end{enumerate} 
The cases {\rm (1.1), (1.2), (2.1)} and {\rm (2.1)} appear in the cases where $|G_0|=1, \ |G_0|>1, \ |G_0|=2$ and $|G_0|>2$ 
respectively. 

\end{theorem}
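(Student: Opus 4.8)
The plan is to analyze $G$ through the decomposition $G = G_T \rtimes G_0$ already recorded in the exact sequence, reducing everything to two ingredients: the abstract structure of the finite subgroup $G_T \subseteq T(E)$ and the way the cyclic group $G_0 = \alpha(G) \subseteq A(E)_0$ acts on it. First I would pin down the action. Writing a lift of $\sigma \in G$ as $\widetilde{\sigma}z = \alpha z + \beta$ and a lift of a translation $\tau \in T(E)$ as $\widetilde{\tau}z = z + t$, a direct computation of $\widetilde{\sigma}\widetilde{\tau}\widetilde{\sigma}^{-1}$ shows that conjugation by $\sigma$ carries the translation by $t$ to the translation by $\alpha t$. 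Hence $G_0$ acts on $G_T \subseteq E$ simply by multiplication by $\alpha$. Since $T(E)\cong E$, every finite subgroup $G_T$ is isomorphic to $Z_m$ or $Z_m\oplus Z_{mn}$, and $G_0$ is cyclic with $|G_0| \in \{1,2,3,4,6\}$ because $A(E)_0$ is cyclic of order $2,4$ or $6$. This sets up a case analysis on $|G_0|$.

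For necessity I would run through the values of $|G_0|$. When $|G_0|=1$ we get $G = G_T$, which is exactly case (1.1). When $|G_0| = 2$ the generator $\alpha = -1$ acts by $t \mapsto -t$; if $G_T \subseteq E[2]$ the action is trivial and $G$ is one of the elementary abelian groups $Z_2, {Z_2}^{\oplus 2}, {Z_2}^{\oplus 3}$ in (1.2), while otherwise $G_T \rtimes \langle -1\rangle$ is dihedral $D_n$ (when $G_T$ is cyclic) or bidihedral $BD_{mn}$ (when $G_T$ has two generators), giving (2.1). When $|G_0| = l \in \{3,4,6\}$ with generator $\alpha = e_l$, requiring $G$ to be abelian forces $(\alpha - 1)G_T = 0$, i.e. $G_T \subseteq \ker(\alpha - 1)$, and computing $\deg(\alpha - 1)$ (the norm of $\alpha - 1$ in the relevant imaginary quadratic order) for each $l$ yields exactly the finite list $Z_3, {Z_3}^{\oplus 2}$ (for $l=3$), $Z_4, Z_2 \oplus Z_4$ (for $l=4$), $Z_6$ (for $l=6$) appearing in (1.2). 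In the non-abelian subcase I would use that $G_T$ is $\alpha$-invariant: on a cyclic factor $Z_k$, invariance means $\alpha$ restricts to multiplication by some integer $h \pmod k$, and applying the minimal polynomial of $\alpha$ (namely $x^2+1$ for $l=4$ and $x^2 \pm x +1$ for $l = 3,6$) to a generator gives $k \mid h^2 + \varepsilon h + 1$, the sign being absorbed by $h \mapsto -h$. This is precisely condition (3) of Definition \ref{5}, and together with the order count $|G| = m^2 k l$ it identifies $G$ as $E(k,l)$ or $E(m,k,l)$, establishing (2.2).

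For sufficiency I would construct a realizing pair $(E, G)$ for each listed type. The groups in (1.1), together with $D_n$ and $BD_{mn}$, are realized on an arbitrary elliptic curve: take $G_T$ to be a suitable finite subgroup of $E$ (each $Z_m \oplus Z_n$ embeds, after rewriting in invariant-factor form) and, for the non-abelian ones, adjoin the inversion $-1 \in A(E)_0$, which exists on every $E$. For the groups requiring $|G_0| = l \in \{3,4,6\}$ I would pass to the CM curve with $j = 0$ (for $l = 3, 6$) or $j = 1728$ (for $l = 4$), so that multiplication by $\alpha = e_l$ is an automorphism, and then produce an $\alpha$-invariant subgroup of the required type. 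Given $h$ with $k \mid h^2 + \varepsilon h + 1$, I would build a cyclic $\alpha$-invariant subgroup of order $k$ by viewing $E[k]$ as a module over the order $\mathbb{Z}[\alpha]$ and selecting the cyclic submodule on which $\alpha$ acts as $h$; adjoining the full $m$-torsion $E[m]$, which is automatically $\alpha$-invariant, upgrades this to the two-generator type $Z_m \oplus Z_{mk}$ and yields $E(m,k,l)$.

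The main obstacle is the number-theoretic core of (2.2): proving the equivalence between the existence of an $\alpha$-invariant cyclic subgroup of order $k$ and the divisibility $k \mid h^2 + \varepsilon h + 1$, in both directions and uniformly in $k$. The forward direction is the minimal-polynomial computation above, but the converse requires a genuine construction — decomposing $E[k]$ via the Chinese Remainder Theorem into its primary parts, analyzing $\mathbb{Z}[\alpha]/k\mathbb{Z}[\alpha]$ prime by prime (where the splitting behavior explains the congruence restrictions on the prime factors of $k$ noted in the Remark), and exhibiting a generator with the prescribed eigenvalue $h$. Handling the two-generator case $E(m,k,l)$ cleanly, in particular verifying that the resulting group is non-abelian of the stated order and matching the indices $m, k, l$ to the module data, is the secondary technical point; the dihedral and bidihedral bookkeeping, including the exclusion of the degenerate small cases that fall back into (1.2), is routine by comparison.
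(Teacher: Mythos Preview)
Your strategy matches the paper's: both analyze $G = G_T \rtimes G_0$ via the conjugation action $t \mapsto \alpha t$, split into abelian versus non-abelian cases, and extract the divisibility $k \mid h^2 + \varepsilon h + 1$ from the minimal polynomial of $e_l$. Where you invoke module-theoretic language ($\deg(\alpha-1)$, $\mathbb{Z}[\alpha]$-modules), the paper does the equivalent thing by explicit lattice computations: writing $\beta=(a+be_l)/k$ and checking congruences in the cyclic case, and in the two-generator case first proving that $\beta,\beta'$ are $\mathbb{R}$-linearly independent so that $\mathbb{Z}\beta+\mathbb{Z}\beta'$ is itself a lattice isomorphic to $\mathcal{L}$, then reading off the action as a conjugate of $B_l=\left(\begin{smallmatrix}0&-1\\1&-\varepsilon\end{smallmatrix}\right)$.

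One step in your construction does not work as written. To realize $E(m,k,l)$ you build a cyclic $\alpha$-invariant subgroup $C$ of order $k$ and then ``adjoin the full $m$-torsion $E[m]$'' to get $Z_m\oplus Z_{mk}$. But $|E[m]+C|=m^2k/\gcd(m,k)$, so this fails whenever $\gcd(m,k)>1$; such cases do occur (e.g.\ $l=4$, $k=2$ from $h=1$, with $m$ even). The easy repair is to take the preimage $[m]^{-1}(C)$ instead: it is $\alpha$-invariant, has order $m^2k$, and contains an element of order $mk$ (any preimage of a generator of $C$), hence is isomorphic to $Z_m\oplus Z_{mk}$. The paper sidesteps the issue by writing down $\beta=d/m$ and $\beta'=d(ar-bs+e_l)/(mk)$ explicitly and verifying the orders (Lemma~\ref{10}). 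A smaller point: in the necessity direction for two generators you should name the ``cyclic factor $Z_k$'' on which $\alpha$ acts as an integer $h$; the characteristic subgroup $mG_T\cong Z_k$ works and is automatically $\alpha$-invariant, and then your minimal-polynomial argument goes through.
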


\begin{remark}
In the case (2) of Theorem \ref{20}, even if $|G|$ is given, there exist many distinct groups. 
For example, let $|G|=1300m^2$, where $m$ is any positive integer. Notice $1300=4\cdot325=4\cdot5^2\cdot13$. 
We consider automorphism groups on the elliptic curve $E=\mathbb C/\mathbb Z+\mathbb Z i$, where $i=e_4$. 
Let $\beta=1/5m $ and $\beta'=(-5+i)/65m$. Then we have
\[
i(\beta, \beta')=(\beta, \beta')M, \ \mathrm{where} \ M=\begin{pmatrix}5 & -2 \\ 13 & -5 \end{pmatrix} \in SL_2(\mathbb Z). 
\]
Hence $G_T \cong Z_{5m} \oplus Z_{65m}$ and $G \cong E(5m,13,4)$. 
On the other hand, let $\beta=1/m$ and $\beta'=(57+i)/325m$. 
Then we have 
\[
i(\beta, \beta')=(\beta, \beta')M, \ \mathrm{where} \ M=\begin{pmatrix}-57 & -10 \\ 325 & 57 \end{pmatrix} \in SL_2(\mathbb Z). 
\] 
Hence $G_T \cong Z_{m} \oplus Z_{325m}$ and $G \cong E(m,325,4)$. 
\end{remark}

The following is the key lemma to obtain the main theorem. 

\begin{lemma}\label{1}
A finite subgroup $G$ of $A(E)$ can be a Galois group at Galois point for genus-one curve $C$ if and only if 
 $|G| \geq 3$ and $|G_0| \ne 1$.
\end{lemma}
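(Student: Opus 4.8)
The plan is to recast the Galois-point hypothesis as the condition that the quotient $E/G$ have genus zero, and to read off both $|G| \ge 3$ and $|G_0| \ne 1$ from Riemann--Hurwitz together with the degree of the projection. I would treat the two implications separately.

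For necessity, suppose $G$ is the Galois group at an outer Galois point $P$ for a genus-one curve $C$ of degree $d$. Since $P \notin C$, a general line through $P$ meets $C$ in $d$ points, so $\deg \bar\pi_P = d$; as $\bar\pi_P$ corresponds to the Galois extension $k(C)/\bar\pi_P^*(k(\mathbb P^1))$ we get $|G| = d \ge 3$. The fixed field $\bar\pi_P^*(k(\mathbb P^1)) = k(C)^G = k(E)^G$ is rational, so the quotient map $\phi : E \to E/G$ has genus-zero target. Writing Riemann--Hurwitz as $0 = 2g(E) - 2 = |G|\,(2g(E/G) - 2) + R$ with ramification term $R \ge 0$, the equality $g(E/G) = 0$ forces $R > 0$, so some nontrivial $\sigma \in G$ fixes a point of $E$. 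A nontrivial element of $T(E)$ acts on $\mathbb C/\mathcal L$ by $z \mapsto z + \beta$ with $\beta \ne 0$ and has no fixed point, so $\sigma \notin T(E)$, and therefore $G_0 = \alpha(G) \ne 1$.

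For sufficiency, assume $|G| \ge 3$ and $G_0 \ne 1$. Choosing $\sigma \in G$ with $\alpha = \alpha(\sigma) \ne 1$, its lift $z \mapsto \alpha z + \beta$ fixes $z = \beta/(1-\alpha)$ on $E$, so $\phi$ is ramified and the same Riemann--Hurwitz identity gives $g(E/G) = 0$, i.e. $E/G \cong \mathbb P^1$. I would then realise $\phi$ as a linear projection. Set $n = |G| = \deg \phi$ and let $L = \phi^*\mathcal O_{\mathbb P^1}(1)$, a line bundle of degree $n$ with $h^0(E, L) = n \ge 3$ by Riemann--Roch; the base-point-free pencil $W = \phi^* H^0(\mathbb P^1, \mathcal O(1)) \subset H^0(E, L)$ has dimension $2$. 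Choosing $f_0, f_1$ spanning $W$ and extending to a three-dimensional subspace with basis $f_0, f_1, f_2$, let $\psi = (f_0 : f_1 : f_2) : E \to \mathbb P^2$ with image $C$. Projection from $P = (0:0:1)$ sends $(f_0 : f_1 : f_2) \mapsto (f_0 : f_1)$, so $\bar\pi_P \circ \psi = \phi$; base-point-freeness of $W$ gives $P \notin C$, and $C$ is a genus-one plane curve (a smooth cubic for $n = 3$, necessarily singular for $n \ge 4$).

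The step I expect to be the main obstacle is to guarantee that $\psi$ is birational onto $C$, for only then do $\deg C = n$ and $k(C) = k(E)$ hold, making the Galois group at $P$ equal to $\mathrm{Gal}(k(E)/k(E)^G) = G$ rather than a proper quotient of it. Here I would use the natural $G$-linearization of $L = \phi^*\mathcal O_{\mathbb P^1}(1)$, under which $H^0(E,L)^G = W$, so $f_0, f_1$ are $G$-invariant and $k(f_1/f_0) = k(E)^G$. For $g \in G$ one then has $g^*(f_2/f_0) = (g^* f_2)/f_0$, which equals $f_2/f_0$ precisely when $g^* f_2 = f_2$; since $|L|$ is very ample for $n \ge 3$, each nontrivial $g$ acts nontrivially on $H^0(E,L)$, so choosing $f_2$ outside the finitely many proper fixed subspaces makes the stabilizer of $f_2/f_0$ trivial. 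Galois theory then gives $k(E)^G(f_2/f_0) = k(E)$, so $\psi$ is birational (for $n = 3$ this is automatic, as $|L|$ embeds $E$ as a smooth cubic). With birationality secured, $\bar\pi_P^*(k(\mathbb P^1)) = \phi^*(k(\mathbb P^1)) = k(E)^G$ exhibits $P$ as an outer Galois point of $C$ with Galois group exactly $G$, completing the proof.
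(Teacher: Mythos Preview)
Your proof is correct and structurally parallel to the paper's: both directions hinge on the equivalence $E/G\cong\mathbb P^1\Leftrightarrow G_0\neq 1$, and the sufficiency is obtained by realising the quotient map $\phi:E\to\mathbb P^1$ as a linear projection of a plane model. The genuine difference lies in how birationality of the plane model is secured. The paper first embeds $E$ in $\mathbb P^{n-1}$ by the full linear system $|D|$, with $D=\phi^*(\text{point})$, and then proves a separate geometric claim: one can project from a generic codimension-one linear subspace $W$ of the center $V=\{X_0=X_1=0\}$ so that the projection $\pi_W$ is birational on $\varphi(E)$; the argument is the classical secant--tangent cone trick, iterated until one lands in $\mathbb P^2$. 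You instead work directly in $\mathbb P^2$ and exploit the natural $G$-linearization of $L=\phi^*\mathcal O_{\mathbb P^1}(1)$: since $H^0(E,L)^G$ equals the two-dimensional pullback pencil $W$ and the action of $G$ on $H^0(E,L)$ is faithful (by very ampleness), a section $f_2$ avoiding the finitely many proper fixed subspaces has trivial $G$-stabilizer, and Galois theory gives $k(E)^G(f_2/f_0)=k(E)$. Your route is shorter and more algebraic, and it identifies precisely which third section works; the paper's route is more geometric, needs no group action on sections, and generalises verbatim to any situation where one wants to project a non-degenerate curve to the plane while preserving a given pencil.
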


Thus the main theorem is stated as follows: 

\begin{theorem}\label{4}
A finite group $G$ can be the Galois group at a Galois point for some genus-one curve if and only if $G$ is isomorphic to one of the 
following{\rm :}  

\begin{enumerate}
\item[(1)] abelian case{\rm :} 

 ${Z_2}^{\oplus2}$,\ ${Z_2}^{\oplus3}$,\ $Z_3$, \ ${Z_3}^{\oplus2}$, \ 
$Z_4$, \ $Z_2\oplus Z_4$ \ or \ $Z_6$ 

\item[(2)] non-abelian case{\rm :}   
\begin{enumerate}
\item[(2,1)] $D_n$ or $BD_{mn}$ $(n \geq 3)$
\item[(2.2)]
$E(k,l)$ or $E(m,k,l)$ 
\end{enumerate}

\end{enumerate}
\end{theorem}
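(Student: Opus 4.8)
The plan is to deduce Theorem \ref{4} directly from the classification in Theorem \ref{20} together with the criterion in Lemma \ref{1}: no further geometry is required beyond recording that a Galois group at a Galois point for a genus-one curve is a finite subgroup of $A(E)$. First I would observe that if $G$ is the Galois group at a Galois point $P$ for a genus-one curve $C$, then each element of $G$ induces a birational transformation of $C$ over $\mathbb P^1$, hence an automorphism of the smooth model $E$, which is an elliptic curve. Thus $G$ embeds as a finite subgroup of $A(E)$, and every finite subgroup of $A(E)$ that is realized as such a Galois group arises in this way. This is exactly the setting in which Lemma \ref{1} operates.

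For the ``only if'' direction, suppose $G$ is the Galois group at a Galois point for some genus-one curve. By the observation above, $G$ is isomorphic to a finite subgroup $H \subseteq A(E)$, and by Lemma \ref{1} this realization satisfies $|H| \geq 3$ and $|H_0| \neq 1$. I would then invoke Theorem \ref{20} and its concluding remark, which ties the four structural cases to the size of $G_0$: case (1.1) occurs exactly when $|G_0| = 1$, while cases (1.2), (2.1) and (2.2) occur when $|G_0| > 1$, $|G_0| = 2$ and $|G_0| > 2$ respectively. Since $|H_0| \neq 1$, the group $H$ cannot be of type (1.1), so it lies in (1.2), (2.1) or (2.2). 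The remaining constraint $|H| \geq 3$ removes only the single group $Z_2$ from list (1.2) and affects none of the non-abelian families, which already have order at least six. Hence $G$ is isomorphic to one of the groups displayed in Theorem \ref{4}.

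For the ``if'' direction I would run the argument in reverse. Given an abstract group $G$ on the list of Theorem \ref{4}, Theorem \ref{20} furnishes a realization of $G$ as a subgroup of $A(E)$ for a suitable $E$, and the remark pins that realization into case (1.2), (2.1) or (2.2), so $|G_0| > 1$, i.e. $|G_0| \neq 1$. A one-line inspection confirms $|G| \geq 3$ for every group on the list. Lemma \ref{1} then produces a genus-one curve $C$ with a Galois point whose Galois group is $G$. Matching the two directions yields the stated equivalence.

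The one place demanding care is the abelian bookkeeping. Several isomorphism types — $Z_2$, $Z_3$, $Z_4$ and $Z_6$ among them — occur in Theorem \ref{20} in case (1.1) as groups of pure translations with $|G_0| = 1$ and again in case (1.2) with $|G_0| > 1$. Because Lemma \ref{1} constrains a chosen subgroup of $A(E)$ rather than an abstract isomorphism type, the correct reading is that an abelian type is admissible precisely when it occurs in case (1.2) (which supplies a realization with $|G_0| \neq 1$) and has order at least three. This criterion keeps every group of (1.2) except $Z_2$, which is eliminated by the order bound, while automatically discarding the translation-only types of (1.1). The non-abelian families of (2.1) and (2.2) pass both tests without exception, since the remark assigns them $|G_0| = 2$ and $|G_0| > 2$ and each has order at least six; thus case (2) of Theorem \ref{4} coincides verbatim with case (2) of Theorem \ref{20}, and the proof concludes.
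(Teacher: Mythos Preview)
Your argument is correct and follows exactly the route the paper takes: the paper's own proof of Theorem~\ref{4} is the single sentence ``Combining Theorem~\ref{20} and Lemma~\ref{1}, we infer readily Theorem~\ref{4},'' and you have simply unpacked that inference, including the bookkeeping that excludes $Z_2$ via the bound $|G|\geq 3$ and the translation-only realizations of case~(1.1) via $|G_0|\neq 1$.
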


Notice that (1) if $C$ is smooth, i.e., an elliptic curve, then $G \cong Z_3$, 
(2) only a finitely many groups can appear if they are abelian. Indeed, their orders are 
at most $9$.

\section{Proofs}
First we prove Theorem \ref{20}. 
If $|G_0|=1$, then $G = G_T$. Since $G_T$ is generated by one or two elements,  
the assertion (1.1) is clear. 
 
Hereafter we assume $G_0=\langle e_l \rangle$, where $l=2, 3, 4$ or $6$. 
Take $\sigma \in A(E)$ such that $\widetilde{\sigma}(z)=e_lz$ and let $G_T= \langle \tau \rangle$ or $\langle \tau, \tau' \rangle$,  
where the latter is the case when $G_T$ has two generators.   
Take the representations $\widetilde{\sigma}(z)=e_lz$, $\widetilde{\tau}(z)=z+\beta$ and $\widetilde{{\tau}'}(z)=z+{\beta}'$ on the universal 
covering $\mathbb C$. 
We have the equality $e_l^2+\varepsilon e_l+1=0$ where $\varepsilon=1,0,-1$ 
corresponding to $l=3,4,6$ respectively. 
Here we note that $\mathbb C/\mathbb Z+\mathbb Ze_3$ is isomorphic to $\mathbb C/\mathbb Z+\mathbb Ze_6$.  

\medskip

The following lemma may be well-known (cf. \cite[Ch.III, \S 10]{s}), 

\begin{lemma}\label{6}
If the elliptic curve $E$ has an automorphism of order $l$ $(l=3, 4, 6)$ with a fixed point, then $E$ is isomorphic 
to $\mathbb C/\mathbb Z+\mathbb Ze_l$.  
\end{lemma}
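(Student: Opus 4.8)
The plan is to prove Lemma~\ref{6} by exploiting the tight relationship between automorphisms with fixed points and the multiplier $\alpha$ in the representation $\widetilde{\sigma}(z)=\alpha z+\beta$. First I would observe that an automorphism of order $l$ with a fixed point can, after translating coordinates so that the fixed point sits at the origin, be lifted to the map $\widetilde{\sigma}(z)=e_l^{\,j} z$ for some $j$ with $\gcd(j,l)=1$; in particular its multiplier is a primitive $l$-th root of unity. The key structural fact already recorded in the excerpt is that $\alpha \mathcal{L}=\mathcal{L}$, so the multiplier acts as an automorphism of the defining lattice.

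The heart of the argument is therefore to show that if the lattice $\mathcal{L}=\mathbb{Z}+\mathbb{Z}\omega$ admits multiplication by a primitive $l$-th root of unity $\zeta$ (i.e.\ $\zeta\mathcal{L}=\mathcal{L}$), then $E=\mathbb{C}/\mathcal{L}$ is isomorphic to $\mathbb{C}/(\mathbb{Z}+\mathbb{Z}e_l)$. Concretely, I would write the condition $\zeta\cdot 1 = p + q\omega$ and $\zeta\cdot\omega = r + s\omega$ for integers $p,q,r,s$, which expresses $\zeta$ as acting via an integer matrix $\begin{pmatrix} p & r \\ q & s \end{pmatrix}$ on the basis. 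Since $\zeta$ has order $l$ and $|\zeta|=1$, this matrix lies in $SL_2(\mathbb{Z})$ and its characteristic polynomial must be the cyclotomic-type relation satisfied by $e_l$; using the identity $e_l^2+\varepsilon e_l+1=0$ from the excerpt (with the appropriate $\varepsilon$), I can pin down the trace and determinant, hence force $\omega$ to satisfy the same quadratic as $e_l$. Solving that quadratic in the upper half-plane (recall $\Im\omega>0$) shows $\omega$ equals $e_l$ up to an $SL_2(\mathbb{Z})$ change of basis, which is exactly the statement that the two lattices are homothetic and the elliptic curves isomorphic.

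The main obstacle I anticipate is handling the case distinction between $l=3,4,6$ cleanly and making sure the normalization is legitimate: one must verify that the fixed point of $\sigma$ can genuinely be taken as the origin without loss of generality, and that the resulting multiplier is \emph{primitive} of order exactly $l$ rather than merely satisfying $\alpha^l=1$. I would also need to rule out the degenerate possibility that the integer matrix representing $\zeta$ fails to generate an order-$l$ element, which the characteristic polynomial computation prevents since an element of $SL_2(\mathbb{Z})$ of finite order $l\in\{3,4,6\}$ forces the corresponding trace values ($-1,0,1$). Once the trace and determinant are fixed, the quadratic for $\omega$ has a unique root with positive imaginary part, so the conclusion follows.

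Given that this lemma is stated as ``well-known'' with a reference to Silverman, I would expect the author's proof to be short and to invoke essentially this lattice-multiplier computation, possibly citing the classification of lattices with complex multiplication by $\mathbb{Z}[i]$ or $\mathbb{Z}[e_3]$ rather than rederiving it; my plan mirrors that underlying computation while keeping the order-$l$ bookkeeping explicit.
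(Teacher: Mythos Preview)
The paper does not actually prove this lemma: it is introduced as ``may be well-known'' with a citation to Silverman, and no argument is supplied. Your plan is precisely the standard complex-multiplication argument that underlies that reference, and you correctly anticipate this in your final paragraph, so there is no divergence to report.

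One point in your sketch is worth tightening. The passage from ``the integer matrix representing $\zeta$ has characteristic polynomial $t^2+\varepsilon t+1$'' to ``$\omega$ equals $e_l$ up to an $SL_2(\mathbb{Z})$ change of basis'' is not a bare quadratic-formula step. From $\zeta=p+q\omega$ with $q\neq 0$ you obtain $\omega\in\mathbb{Q}(e_l)$, so $\mathcal{L}$ is a fractional $\mathbb{Z}[e_l]$-ideal; concluding that $\mathcal{L}$ is \emph{homothetic} to $\mathbb{Z}[e_l]$ then uses that $\mathbb{Z}[e_l]$ has class number one for $l=3,4,6$ (equivalently, that any element of $SL_2(\mathbb{Z})$ with the given trace and determinant is $SL_2(\mathbb{Z})$-conjugate to the standard $B_l$). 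You allude to this at the end (``citing the classification of lattices with complex multiplication by $\mathbb{Z}[i]$ or $\mathbb{Z}[e_3]$''), but in the body of the argument it reads as though solving the quadratic in the upper half-plane alone pins down $\omega$, which it does not without that arithmetic input.
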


We prove the theorem by distributing the cases (I) $G$ is abelian and (II) $G$ is not abelian. 

\bigskip

(I) The case where $G$ is abelian. 

\begin{claim}\label{21}
If $l=2,\ 3,\ 4$ and $6$, then $2\beta,\ 3\beta, \ 2\beta$ and $\beta$ belong to $\mathcal L$, respectively. 
\end{claim}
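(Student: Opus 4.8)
The plan is to reduce all four cases to a single divisibility relation obtained from the abelian hypothesis, and then read off each statement by a one-line computation inside $\mathcal{L}$. First I would choose an element $g \in G$ with $\alpha(g)=e_l$; such a $g$ exists because $G_0=\alpha(G)=\langle e_l\rangle$, and it has a lift $\widetilde{g}(z)=e_lz+\gamma$ for some constant $\gamma$. Since $G$ is abelian, $g$ commutes with $\tau$, so the two lifts $\widetilde{g\tau}(z)=e_lz+e_l\beta+\gamma$ and $\widetilde{\tau g}(z)=e_lz+\beta+\gamma$ must represent the same automorphism of $E$, i.e.\ they agree modulo $\mathcal{L}$. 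Comparing them, the translation part $\gamma$ cancels and I am left with $(e_l-1)\beta\in\mathcal{L}$. Note that because $\gamma$ cancels it is immaterial whether the chosen $g$ is the pure rotation $\sigma$ or not.

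From this relation the case $l=2$ is immediate, since $e_2-1=-2$ is a rational integer, so $(e_2-1)\beta=-2\beta\in\mathcal{L}$ gives $2\beta\in\mathcal{L}$ directly. For $l=3,4,6$ I would invoke Lemma \ref{6} to take $\mathcal{L}=\mathbb{Z}+\mathbb{Z}e_l$, which is then an order in an imaginary quadratic field and in particular is closed under multiplication and under complex conjugation; here $\overline{e_l}=e_l^{-1}=e_l^{\,l-1}\in\mathcal{L}$, so $\overline{e_l-1}=e_l^{-1}-1\in\mathcal{L}$. Multiplying the relation $(e_l-1)\beta\in\mathcal{L}$ by this conjugate factor and using that $\mathcal{L}$ is a ring, I obtain $|e_l-1|^2\,\beta\in\mathcal{L}$. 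A direct evaluation gives $|e_l-1|^2=2-2\cos(2\pi/l)$, which equals $3,2,1$ for $l=3,4,6$, yielding $3\beta,\ 2\beta,\ \beta\in\mathcal{L}$ respectively, exactly as claimed.

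The proof is short, so the only real thing to get right is the passage between the two descriptions of an element of $A(E)$ — as an affine map $z\mapsto\alpha z+\beta$ and as its class modulo $\mathcal{L}$ — so that ``$g$ and $\tau$ commute in $A(E)$'' is translated into the congruence $(e_l-1)\beta\equiv 0\pmod{\mathcal{L}}$ and not into a stricter equality. After that the content is purely the norm computation, together with the observation that Lemma \ref{6} is precisely what licenses treating $\mathcal{L}$ as the Gaussian or Eisenstein integers, so that the conjugate factor $e_l^{-1}-1$ remains in the lattice. The case $l=2$ is mildly exceptional in that Lemma \ref{6} does not apply, but it is also unnecessary there: the rational-integer factor $-2$ does the work and even yields the sharper bound $2\beta$ rather than the $4\beta$ that the blind norm computation $|e_2-1|^2=4$ would give.
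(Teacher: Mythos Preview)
Your argument is correct and follows essentially the same route as the paper: both obtain $(e_l-1)\beta\in\mathcal L$ from commutativity and then clear the factor $e_l-1$ by arithmetic in $\mathcal L$. The only cosmetic difference is that the paper multiplies by $e_l$ (using $e_l\mathcal L=\mathcal L$) and then invokes the relation $e_l^2+\varepsilon e_l+1=0$, whereas you multiply by the conjugate $e_l^{-1}-1$ and read off the norm $|e_l-1|^2$; these are the same computation in disguise. One small remark: you do not actually need Lemma~\ref{6} for this step, since $e_l\mathcal L=\mathcal L$ alone already makes $\mathcal L$ a $\mathbb Z[e_l]$-module, so $(e_l^{-1}-1)\mathcal L\subseteq\mathcal L$ holds without normalizing the lattice.
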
 

\begin{proof} 
Since $\sigma \tau=\tau \sigma$, we have $(e_l-1)\beta \in \mathcal L$. 
If $l=2$, then $e_l=-1$, hence $2\beta \in \mathcal L$. 
On the other hand, if $l \ne 2$, then $e_l^2+\varepsilon e_l+1=0$ where $\varepsilon=1,0,-1$. 
Using the property $e_l \mathcal L=\mathcal L$, we infer readily that $3\beta, 2\beta$ and  $\beta$ belong to $\mathcal L$ in the case where 
$l=3,4$ and $6$ respectively. 
\end{proof}

Using this claim we conclude the following assertions. 

\begin{enumerate}
\item[(i)] If $l=2$, then $2\beta$ (and $2{\beta}'$) $\in \mathcal L$. Therefore we have $G \cong Z_2, \ Z_2^{\oplus 2}$ or $Z_2^{\oplus 3}$ 
corresponding to $G_T \cong id, \ Z_2$ and $Z_2^{\oplus 2}$ respectively. 
\item[(ii)] If $l=3$, then we have $3\beta \in \mathcal L$. By Lemma \ref{6} we can put $\beta=(a+be_l)/3$. Since $(e_l-1)\beta \in \mathcal L$, we have $(a, \ b)=(1,\ 2)$ or $(2, \ 1)$, which means that $G_T$ is generated by one 
element. Therefore we have 
$G \cong Z_3$ or $Z_3^{\oplus 2}$. 
\item[(iii)] If $l=4$, then by Lemma \ref{6} we can put $\beta=(1+e_l)/2$, similarly we infer $G \cong Z_4$ or $Z_2 \oplus Z_4$. 
\item[(iv)] If $l=6$, then $\beta \in \mathcal L$, i.e., $\tau$ is identity. Therefore we have $G \cong Z_6$.  
\end{enumerate}

Conversely, observing each step of the proof, we see that the group appeared above can be a finite subgroup of $A(E)$ for some $E$.

\medskip
\medskip

(II) The case where $G$ is not abelian.

\medskip
 
First, we treat the case where  $G_0=\langle-1\rangle$. 
Then, clearly we have $\sigma \tau \sigma^{-1}=\tau^{-1}$. Hence $G_T$ is a dihedral or bidihedral groups according to that $G_T$ is generated by 
one or two elements, respectively. 

Next, we treat the case where $G_0=\langle e_l \rangle$ ($l=3, 4$ or $6$). 

\smallskip

If $G_T$ is generated by one element of order $k$, then $k$ satisfies some condition as follows. 
We can express $\sigma \tau \sigma^{-1}=\tau^h$ for some integer $h$. 
Let $\beta=(a+be_l)/k$, where $gcd(a,b,k)=1$. 
Then, we have $e_l\beta \equiv h\beta \pmod{\mathcal L}$. This is equivalent to  
\[
\left\{
\begin{array}{ccc}
ha+b & \equiv & 0 \pmod{k} \\
-a+(h+\varepsilon)b & \equiv & 0 \pmod{k}. 
 \end{array} \right. 
\]
From these equations we infer readily that $k$ is a factor of $h^2+\varepsilon h+1$. 
Conversely, suppose a factor $k$ of $h^2+\varepsilon h+1$ is given. 
Express $h^2+\varepsilon h+1=kk'$ and put $\beta=((h+\varepsilon)+e_l)/k$. Then, $\beta \pmod{\mathcal L}$ has order $k$. 
We have $e_l\beta=(e_lh-1)/k$ and $h\beta=k'+(he_l-1)/k$. 
Thus there exists a finite subgroup $G$ of $A(E)$ such that $G$ is isomorphic to 
$E(k, l)$. 

Finally we prove the remainder case. 
Assume $G_T$ needs two generators $\tau$ and $\tau'$ such that
 ord$(\tau)=m$, ord$(\tau')=m'$, where $m'=mk$. 
 So $G_T$ is isomorphic to $Z_m \oplus Z_{m'}$ as $\mathbb Z$-modules. 
 Similarly as above, $k$ must satisfy some condition. In order to find it, we examine the action of $G_0$ onto $G_T$.  
Let the lifts of $\tau$ and $\tau'$ on $\mathbb C$ be $\widetilde{\tau}(z)=z+\beta$ and $\widetilde{\tau'}(z)=z+\beta'$ such that 
 $m\beta \in \mathcal L$ and $m'\beta' \in \mathcal L$, respectively. 
The actions of $G_0$ on $G_T$ are given by  $\widetilde{\sigma \tau \sigma^{-1}}(z)=z+e_l \beta$ and $\widetilde{\sigma \tau' \sigma^{-1}}(z)=z+e_l \beta'$. 
Thus the actions are represented by a matrix $A_l$ such that $e_l(\beta, \beta')=(\beta, \beta')A_l$, where $A_l \in \mathrm{Aut}(Z_m \oplus Z_{m'})$. 
This defines the structure of the semi-direct product $G=G_T \rtimes G_0$. 

\begin{lemma}\label{7} 
If $G_T$ needs two generators $\tau$ and $\tau'$, then $\beta$ and $\beta'$ are linearly independent over $\mathbb R$.
\end{lemma}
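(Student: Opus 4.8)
The plan is to argue by contradiction: I would suppose that $\beta$ and $\beta'$ are linearly dependent over $\mathbb{R}$ and derive that $G_T$ must be cyclic, contradicting the hypothesis that it needs two generators. First I note that $\beta \neq 0$ as a complex number, since $\tau$ has order $m \geq 2$ and $\widetilde{\tau}(z) = z + \beta$. Under the assumed $\mathbb{R}$-dependence, both $\beta$ and $\beta'$ then lie on a single real line through the origin, $\ell = \mathbb{R}\beta \subset \mathbb{C}$.

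The crucial observation is that $\beta$ is a torsion point, so $m\beta \in \mathcal{L}$; since $\beta \neq 0$, the vector $m\beta$ is a nonzero lattice vector lying on $\ell$. Hence $\ell$ meets $\mathcal{L}$ in a nontrivial subgroup, which, being discrete inside the line, is a rank-one lattice $\mathbb{Z}v$ for some $v \in \mathcal{L} \setminus \{0\}$. Consequently the image of $\ell$ under the projection $\mathbb{C} \to \mathbb{C}/\mathcal{L}$ is $\ell/(\ell \cap \mathcal{L}) \cong \mathbb{R}/\mathbb{Z}$, that is, a closed one-dimensional subgroup isomorphic to the circle.

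Both generators of $G_T$, namely the classes of $\beta$ and $\beta'$ modulo $\mathcal{L}$, lie in this circle, so $G_T$ is realized as a finite subgroup of $\mathbb{R}/\mathbb{Z}$. Since every finite subgroup of $\mathbb{R}/\mathbb{Z}$ is cyclic (its torsion subgroup $\mathbb{Q}/\mathbb{Z}$ is locally cyclic), $G_T$ would be cyclic, contradicting the assumption that it requires two generators. This contradiction forces $\beta$ and $\beta'$ to be linearly independent over $\mathbb{R}$.

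The step I expect to demand the most care is the passage from ``$\ell$ contains a nonzero lattice point'' to ``$\ell$ maps onto a circle in $\mathbb{C}/\mathcal{L}$.'' This is precisely the dichotomy governing the image of a line in a real two-torus: the image is closed (a circle) exactly when the line carries a nonzero element of the lattice, and is dense otherwise. The torsion of $\beta$ is what places us in the closed case, and I would state this dichotomy explicitly rather than leave it implicit, since it is the mechanism by which $\mathbb{R}$-dependence collapses the two-generator structure into a cyclic one.
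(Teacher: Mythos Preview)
Your proof is correct and follows essentially the same approach as the paper: both argue by contradiction, place the generators on a single real line through the origin, and conclude that $G_T$ is cyclic. The paper reaches this conclusion by selecting a nonzero element of minimal norm among the lifts of $G_T$ lying on that line, whereas you quotient by $\mathcal L$ first and invoke the fact that finite subgroups of $\mathbb R/\mathbb Z$ are cyclic; these are two packagings of the same one-variable Euclidean argument.
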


\begin{proof}
Suppose the contrary. 
Then, there exists $c \in \mathbb R$ such that $c \ne 0$ and $\beta'=c\beta$. 
Let $L$ be the line $\{\ a\beta+a'\beta' \ | \ a,\ a' \in \mathbb R \   \}$ in $\mathbb C$. 
Put $L_G=\{\ \lambda \ | \ \lambda \in L \ \mathrm{and} \ \tau \in G_T, \ \mathrm{where}\ \widetilde{\tau}(z)=z+\lambda  \}$. 
Since $|G_T|$ is finite, there exists $\lambda_0 \in L_G$ ($\lambda_0 \ne 0$) such that the norm of $\lambda_0$ is minimum in $L_G$. 
Then, $G_T$ is generated by $\lambda_0$, which is a contradiction.
\end{proof}

Let us determine the matrix $A_l$. 
Since $\beta$ and $\beta'$ are linearly independent over $\mathbb R$, the group $\mathcal L'=\mathbb Z \beta + \mathbb Z \beta'$ makes a lattice 
defining the elliptic curve $E'=\mathbb C/\mathcal L'$. 
 Since $G_0$ acts on $G_T$, the elliptic curve $E'$ has the automorphism of order $l$ with fixed points. By Lemma \ref{6}, $E'$ is isomorphic to 
the original $E=\mathbb C/\mathcal L$. 
Hence we have   
\[
(\beta,\ \beta')=\lambda(1,\ e_l)M , \ \mathrm{where} \ M=\begin{pmatrix}p & r \\ q & s \end{pmatrix} \in GL_2(\mathbb Z) 
\ \mathrm{and} \ \lambda \in \mathbb C. \eqno{(a)} 
\]
Using the basis $(1,\ e_l)$, we can represent the action of $\sigma$ as $\sigma(\beta, \beta')= (e_l \beta, e_l \beta')=(\beta, \beta')A_l$.   
In this expression $A_l$ is given by  
\[
A_l=M^{-1} B_l M, \ \mathrm{where} \ B_l=\begin{pmatrix}0 & -1 \\ 1 & -\varepsilon \end{pmatrix}
\]
such that $\varepsilon = 1, 0$ and $-1$, corresponding to $l=3,4$ and $6$ respectively. 

\begin{remark}
If $k > 1$, then $\mathrm{Aut}(Z_m \oplus Z_{m'}) \cong \mathrm{Aut}(Z_m) \oplus \mathrm{Aut}(Z_{m'})$, hence $A_l$ is diagonal. 
\end{remark}

\begin{example}\label{8}
Let us examine $A_l$ by an example in the case where $l=4$. 
Let $\mathcal L=\mathbb Z+\mathbb Zi$, where $i=e_4$, $\beta=(2+i)/5$ and $\beta'=(3+i)/10$. Clearly we have  
\[
(i\beta,\ i\beta') \equiv (\beta, \ \beta') \begin{pmatrix}2 & 0 \\ 0 & 3 \end{pmatrix} \pmod{\mathcal L}. 
\eqno{(b)}   
\]
On the other hand, from the equation $(\beta, \ \beta')=\lambda (1, \ i)M$ we infer that 
\[
M= \pm \begin{pmatrix}1 & 1 \\ 3 & 2 \end{pmatrix} \ \mathrm{or} \ \ \pm \begin{pmatrix}3 & 2 \\ -1 & -1 \end{pmatrix}.   
\]
Hence we have 
\[
A_4= \begin{pmatrix}7 & 5 \\ -10 & -7 \end{pmatrix}. 
\]
Since $5\beta \equiv 0 \pmod{\mathcal L}$ and $10\beta' \equiv 0 \pmod{\mathcal L}$, this action is the same as the above $(b)$. 
\end{example}

\begin{remark}\label{9}
Put $\varepsilon =1, 0, -1$ corresponding to $l=3,4,6$ respectively.
Then $A_l$ can be expressed as 
\[
\pm \begin{pmatrix}-pr+\varepsilon qr-qs & -r^2+\varepsilon sr-s^2 \\ p^2-\varepsilon pq+q^2 & pr-\varepsilon ps+qs \end{pmatrix},   
\]
where $ps-qr=\pm1$. 
\end{remark}

\bigskip

We take $-\beta$ instead of $\beta$ if necessary, then we can assume $\Im (\beta'/\beta) > 0$, hence $M$ belongs to $SL_2(\mathbb Z)$ in $(a)$.  
By the relation $(a)$ we have  
\[
m'(\beta, \beta')M^{-1}=m'\lambda (1, e_l).
\] 
Since $m'\beta$ and $m'\beta'$ belong to $\mathcal L$, we see that $\lambda$ can be expressed as $d(a+be_l)/m'$, where $a, b$ and $d$ are 
integers such that $gcd(a,b)=1$ and $gcd(d,m')=1$. 
Therefore, by $(a)$ we can express $\beta$ and $\beta'$ as 
\[
\beta=\frac{d}{mk}  \{(ap-bq)+(bp-\varepsilon bq +aq)e_l  \} \eqno{(c)}
\]
and
\[\beta'=\frac{d}{mk} \{ (ar-bs)+(br-\varepsilon bs+as)e_l \}, \eqno{(d)}
\]
where $\varepsilon=1, 0, -1$ corresponding to $l=3,4,6$ respectively. 
Put $\bar{\beta} \equiv \beta$ (mod $\mathcal L$) and $\bar{\beta'} \equiv \beta'$ (mod $\mathcal L$). 
Then, $\mathrm{ord} \bar{\beta}=m$ and $\mathrm{ord} \bar{\beta'}=m'$ if and only if the following conditions are satisfied 
under the assumptions that $gcd(a, b)=1$ and $ps-qr=1$: 
\[
\left\{
\begin{array}{ccc}
 gcd(ap-bq,\ bp-\varepsilon bq+aq,\ mk) & = & k \\
 gcd(ar-bs,\ br-\varepsilon bs+as,\ mk) & = & 1 \\
 \end{array} \right. \eqno{(e)} 
\]

Now we proceed with the proof.  
If such a $G$ exists, then take the generators $\beta$ and $\beta'$ of $G_T$ as above. 
So we have $g=m^2k$ and $m'=mk$. 
By the condition $(e)$ we have the relations $ap-bq \equiv 0 \pmod{k},\ bp-\varepsilon bq+aq \equiv 0 \pmod{k}$. 
These relations imply $(a^2-\varepsilon ab+b^2)p \equiv 0 \pmod{k}$ and $(a^2-\varepsilon ab+b^2)q \equiv 0 \pmod{k}$. Since $gcd(p,q)=1$,  
$k$ is a factor of $a^2-\varepsilon ab+b^2$. Moreover, since $gcd(a, b)=1$, we can find an integer $h$ such that $k$ 
is a factor of $h^2+\varepsilon h+1$. 

Conversely, suppose $g$ satisfies the condition. 
Then, let $k$ be a factor of $h^2+\varepsilon h+1$. It is not difficult to see that 
$k$ can be expressed as $a^2-\varepsilon ab+b^2$ for some integers $a$ and $b$ such that $(a, b)=1$. Then, by the 
following lemma, the proof is complete.  

\begin{lemma}\label{10} 
Suppose the positive integer $k$ is expressed as $a^2-\varepsilon ab+b^2$ for some integers $a,\ b$, where $gcd(a, b)=1$. 
Then, there exists a finit subgroup $G$ of $A(E)$ for some elliptic curve $E$ such that $|G_T|=m^2k$.   
\end{lemma}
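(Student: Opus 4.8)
The plan is to realize $k$ as a norm in the ring $\mathbb{Z}[e_l]=\mathbb{Z}+\mathbb{Z}e_l$ (multiplicatively closed since $e_l^2=-1-\varepsilon e_l$) and to engineer the matrix $M$ of $(a)$ so that \emph{both} gcd-conditions in $(e)$ become transparent. Because $e_l^2+\varepsilon e_l+1=0$, the sum and product of $e_l,\bar e_l$ are $-\varepsilon$ and $1$, so the element $\gamma=a+be_l$ has conjugate $\bar\gamma=(a-\varepsilon b)-be_l$ and norm $\gamma\bar\gamma=a^2-\varepsilon ab+b^2=k$. The guiding observation is to take the first column of $M$ to be the conjugate, i.e. $(p,q)=(a-\varepsilon b,\,-b)$, so that $\gamma(p+qe_l)=\gamma\bar\gamma=k$ is a rational integer. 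Then the pair $(u,v):=(ap-bq,\;bp-\varepsilon bq+aq)$ appearing in $(c)$ equals $(k,0)$, and the first condition in $(e)$, namely $\gcd(ap-bq,\,bp-\varepsilon bq+aq,\,mk)=k$, holds automatically.

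First I would record $\gcd(p,q)=\gcd(a-\varepsilon b,b)=\gcd(a,b)=1$, so by Bezout there are integers $r,s$ with $ps-qr=1$; set $M=\bigl(\begin{smallmatrix}p&r\\ q&s\end{smallmatrix}\bigr)\in SL_2(\mathbb{Z})$, $\mu=p+qe_l$, $\nu=r+se_l$, and write $\gamma\nu=u'+v'e_l$. The key point for the second condition is that $\{\mu,\nu\}$ is a $\mathbb{Z}$-basis of $\mathbb{Z}[e_l]$ (as $\det M=1$), hence $\{\gamma\mu,\gamma\nu\}=\{k,\,u'+v'e_l\}$ is a $\mathbb{Z}$-basis of the ideal $\gamma\mathbb{Z}[e_l]$, whose index in $\mathbb{Z}[e_l]$ is $N(\gamma)=k$. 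Expressing this basis in the coordinates $\{1,e_l\}$ gives a coordinate matrix of determinant $kv'$, and equating $|kv'|$ with the index $k$ forces $v'=\pm1$. Consequently the second condition in $(e)$, $\gcd(ar-bs,\,br-\varepsilon bs+as,\,mk)=\gcd(u',v',mk)=1$, holds for \emph{every} $m$. Taking $d=1$, $E=\mathbb{C}/(\mathbb{Z}+\mathbb{Z}e_l)$, and for any $m\geq 2$ setting $\beta=\gamma\mu/(mk)=1/m$ and $\beta'=\gamma\nu/(mk)=(u'+v'e_l)/(mk)$, both relations in $(e)$ are satisfied, so $\bar\beta$ and $\bar\beta'$ have orders $m$ and $mk=m'$ respectively.

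Finally I would assemble the group. Since $\mu,\nu$ form a basis, $\beta,\beta'$ are $\mathbb{R}$-linearly independent (Lemma \ref{7}), whence $G_T=\langle\bar\beta,\bar\beta'\rangle\cong Z_m\oplus Z_{mk}$ and $|G_T|=m^2k$. Writing $\sigma$ for $z\mapsto e_lz$, the identity $e_l(1,e_l)=(1,e_l)B_l$ gives $e_l(\beta,\beta')=(\beta,\beta')A_l$ with $A_l=M^{-1}B_lM\in SL_2(\mathbb{Z})$; as $A_l$ has integer entries, $\sigma$ carries $\bar\beta,\bar\beta'$ into $G_T$, so $G_T$ is normal and $G=\langle\sigma,\tau,\tau'\rangle=G_T\rtimes G_0$ is the desired finite subgroup of $A(E)$ with $|G_T|=m^2k$. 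The one step carrying the real content is the choice $\mu=\bar\gamma$: it turns both conditions of $(e)$ into formal consequences — the first by making $\gamma\mu$ rational, the second through the index computation $v'=\pm1$ — so that no delicate Diophantine work is needed beyond the single Bezout step, which relies only on $\gcd(a,b)=1$. I expect the main hazard to be purely bookkeeping: tracking the sign of $\varepsilon$ and the $\pm$ ambiguities in $M$ and in $v'$, none of which affect the orders or the semidirect structure.
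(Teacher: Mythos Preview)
Your proof is correct and follows essentially the same route as the paper: both take $(p,q)=(a-\varepsilon b,-b)$ (your $\mu=\bar\gamma$), complete to $M\in SL_2(\mathbb{Z})$ via B\'ezout on $\gcd(p,q)=\gcd(a,b)=1$, and obtain $\beta=d/m$, $\beta'=d(u'+v'e_l)/(mk)$ so that both conditions in $(e)$ hold. One small redundancy worth noting: your index computation for $v'=\pm1$ is unnecessary, because with this choice of $(p,q)$ one has $ps-qr=(a-\varepsilon b)s+br=as+br-\varepsilon bs=v'$, so your B\'ezout relation $ps-qr=1$ \emph{is} the statement $v'=1$ (this is exactly the paper's condition $sa+(r-\varepsilon s)b=1$).
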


\begin{proof}
Assume such $\{a, b, k \}$ are given. Then, put $m'=mk$ and let $d$ be any positive 
integer satisfying $gcd(d, m')=1$. Then let $\beta$ and $\beta'$ be given by (c) and (d), respectively. 
Put $p=a-\varepsilon b, \ q=-b$ in $(c)$, then there exist $r$ and $s$ satisfying $sa+(r-\varepsilon s)b=1$. 
Clearly we have $M=\begin{pmatrix}p & r \\ q & s \end{pmatrix} \in SL_2(\mathbb Z) $. 
Further, put $\lambda=d(a+be_l)/m'$. 
We have 
\[
\beta=\frac{d}{m} \ \ \mathrm{and} \ \ \beta'= \frac{d}{mk}(ar-bs+e_l), 
\]
hence $\beta$ and $\beta'$ make the lattice $\mathcal L'$. 
Since 
\[
e(\beta, \beta')=\lambda (e, e^2)M = \lambda (1, e) \begin{pmatrix}0 & -1 \\ 1 & -\varepsilon \end{pmatrix}=(\beta, \beta')M', 
\]
where 
\[
M'=M^{-1} \begin{pmatrix}0 & -1 \\ 1 & -\varepsilon \end{pmatrix} M \in SL_2(\mathbb Z),  
\]
the elliptic curve $\mathbb C/\mathcal L'$ 
has the automorphism of order $l$. 
By the definitions of $\beta$ and $\beta'$ we see that the order of $G_T=\langle \beta, \beta' \rangle$ on $E$  
is $m^2k$.   
\end{proof}

\medskip

Now we proceed with the proof of Lemma \ref{1}. 
Assume the ``if part". 
Since $|G_0| \ne 1$, $G$ has a fixed point and hence $E/G$ is a rational curve. Let $p:E \longrightarrow E/G \cong \mathbb P^1$ be the quotient morphism 
 and $D$ the pullback divisor of a 
point of $\mathbb P^1$. The divisor $D$ is very ample and let $\varphi=\varphi_D$ be the embedding given by a basis of $\mathrm{H}^0(E, \mathcal O(D))$. 
Take a set of basis $\{s_0, s_1, \ldots, s_{n-1} \}$, where $n=|G|$, such that $s_0$ and $s_1$ come from $\mathrm{H}^0(\mathbb P^1, \mathcal O(P))$. 
Let $(X_0, X_1, \ldots, X_{n-1})$ be a set of homogeneous coordinates of $\mathbb P^{n-1}$ and $\pi_V$ be the projection with the center 
$V:X_0=X_1=0$. Then, we have $V \cap \varphi(E)=\emptyset$ and $p=\pi_V \cdot \varphi$.

\begin{claim}\label{2}
There exists a linear subvariety $W$ of $V$ with $\dim W=\dim V-1$ such that the projection $\pi_W$ with the center $W$ gives the 
birational morphism from $\varphi(E)$ onto its image.
\end{claim}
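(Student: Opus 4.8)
The plan is to realize every admissible center as a choice of one extra linear form and then show, by a dimension count on an incidence correspondence, that a general such form already yields a birational projection. Note that $V=\{X_0=X_1=0\}$ is a linear space $\cong\mathbb P^{n-3}$, so a $W\subset V$ with $\dim W=\dim V-1$ is precisely a hyperplane of $V$, cut out by one linear form. Since restricting a form $\sum_{i} c_iX_i$ to $V$ only sees $\ell:=\sum_{i\ge 2}c_iX_i$, and modifying $\ell$ by a combination of $X_0,X_1$ leaves $W$ unchanged while altering $\pi_W$ only by an automorphism of the target $\mathbb P^2$, I would parametrize the admissible centers by $[c_2:\cdots:c_{n-1}]\in\mathbb P^{n-3}$ and write the induced map on $E$ as the morphism $f_\ell=[\,s_0:s_1:\sum_{i\ge 2}c_is_i\,]$. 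This is a genuine morphism because $W\subset V$ and $V\cap\varphi(E)=\emptyset$ force $W\cap\varphi(E)=\emptyset$. Thus the whole claim reduces to finding one $\ell$ for which $f_\ell$ is generically injective. The case $n=3$ is trivial ($\dim V=0$, so $W=\emptyset$ and $\pi_W=\mathrm{id}$), so I would assume $n\ge 4$.

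Next I would localize the possible failures of injectivity. Since $[s_0:s_1]=p$ already separates points in distinct fibers of $p$, any two distinct points identified by $f_\ell$ must lie in a common fiber, hence be of the form $Q$ and $gQ$ with $g\in G\setminus\{1\}$. I would therefore introduce the incidence variety $\mathcal I\subset\mathbb P^{n-3}\times\bigl(E\times_{\mathbb P^1}E\setminus\Delta\bigr)$ consisting of triples $(\ell,Q,Q')$ with $Q\ne Q'$, $p(Q)=p(Q')$ and $f_\ell(Q)=f_\ell(Q')$. The base $E\times_{\mathbb P^1}E\setminus\Delta=\bigcup_{g\ne 1}\Gamma_g$, the union of the graphs of the automorphisms $g$ minus their fixed loci, is one–dimensional.

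The heart of the computation is the fiber of $\mathcal I$ over a fixed pair $(Q,gQ)$. After normalizing the common value of $[s_0:s_1]$, the condition $f_\ell(Q)=f_\ell(gQ)$ reads $\sum_{i\ge 2}c_i\bigl(s_i(Q)-s_i(gQ)\bigr)=0$, a single linear equation in the $c_i$. It is a nontrivial equation: if all $s_i(Q)=s_i(gQ)$ for $i\ge 2$ then, together with the agreeing first two coordinates, $\varphi(Q)=\varphi(gQ)$, contradicting that $\varphi$ is an embedding. Hence each such fiber is a hyperplane $\cong\mathbb P^{n-4}$, giving $\dim\mathcal I=1+(n-4)=n-3=\dim\mathbb P^{n-3}$. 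Projecting $\mathcal I\to\mathbb P^{n-3}$, either the map is non-dominant, and a general $\ell$ produces no coincidences at all, or it is dominant, and equality of dimensions forces the general fiber to be finite, so a general $\ell$ produces only finitely many coinciding pairs. In both cases $f_\ell$ is generically injective, hence birational onto its image, and the image is the desired plane curve.

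The main obstacle is that this count is exactly balanced, $\dim\mathcal I=\dim\mathbb P^{n-3}$, so dimension alone does not rule out a positive-dimensional family of coincidences over a general $\ell$; everything hinges on the fiberwise step showing that each coincidence imposes a \emph{proper} linear condition, which is exactly where the separation-of-points property of $\{s_0,\dots,s_{n-1}\}$ (i.e.\ that $\varphi$ is an embedding) is indispensable. The remaining care is routine: the finitely many components $\Gamma_g$ should be treated uniformly, and the fixed-point loci where a pair degenerates onto the diagonal are harmless since they are finite.
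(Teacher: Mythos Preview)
Your argument is correct and proves the claim, but it proceeds quite differently from the paper's. The paper argues iteratively: it fixes a single point $Q\in\varphi(E)$, observes that the cone $C_Q=\bigcup_{Q'\in\varphi(E)}\ell_{QQ'}$ together with the tangent line $T_Q$ is at most two-dimensional, and then chooses a center $R\in V\setminus(C_Q\cup T_Q)$. Projection from $R$ is then unramified at $Q$ with fiber $\{Q\}$ over $\pi_R(Q)$, forcing the degree of $\pi_R|_{\varphi(E)}$ to be $1$; one repeats this step, lowering the dimension of the center one at a time, until the target is $\mathbb P^2$. Your approach instead handles the full projection to $\mathbb P^2$ in one stroke via an incidence correspondence over $\bigcup_{g\ne 1}\Gamma_g$ and a dimension count. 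The paper's method is more elementary and geometric (it is essentially the classical ``project from outside the secant variety'' trick, localized at a single point $Q$), while yours yields the slightly stronger statement that a \emph{general} hyperplane $W\subset V$ already works. One small point worth tightening: since $\mathcal I$ need not be irreducible, your dichotomy ``non-dominant or generically finite'' should be applied to each irreducible component separately, but the conclusion is unaffected.
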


\begin{proof}  
Take a point $Q \in \varphi(E)$ and let $C_Q$ be a cone with the center $Q$, i.e., $C_Q= \bigcup \{\ \ell_{QQ'} \ | \ Q' \in \varphi(E) \}$, where $\ell_{QQ'}$ is the line 
passing through two points $Q$ and $Q'$. Taking another point $Q$ if necessary, we may assume 
$V$ is not contained in $C_Q$. Let $T_Q$ be the tangent line to $\varphi(E)$ at $Q$ and $R$ be a point in $V \setminus (C_Q \cup T_Q)$. 
Consider the projection $\pi_R : \mathbb P^{n-1} \dashrightarrow \mathbb P^{n-2}$ with the center $R$. Then the restriction $\pi_R$ to $\varphi(E)$ 
does not ramify at $Q$ and the fiber of 
$\pi_R(\varphi(E))$ consists of one point, i.e., it is a birational morphism from $\varphi(E)$ onto its image. Repeating these procedures, we get 
the required projection to the plane.    
\end{proof}

Let $C$ be the curve $\pi_W(\varphi(E))$. It is an irreducible plane curve with the outer Galois point $\pi_W(V)$. It is easy to see the Galois group is isomorphic to 
$G$. Conversely, assume $G$ is the Galois group.  Let $E$ be the normalization of $C$. Then $G$ turns out to be an automorphism group of $E$, and by definition we have that $E/G$ is isomorphic to $\mathbb P^1$. Since $\deg C \geq 3$, we see easily that 
$|G_0| \ne 1$ and $|G| \geq 3$. Thus we complete the proof. 

\medskip

Combining Theorem \ref{20} and Lemma \ref{1}, we infer readily Theorem \ref{4}. 
Thus we complete all the proofs.

\section{Defining Equations}
In this section we give the defining equations of genus-one curves with the Galois points whose Galois groups are abelian. 
We use the following notation. 

\begin{namelist}{3}
\item[$\cdot$]$E$ : the smooth elliptic curve defined by $y^2=x^3+px+q$ or $y^2=x(x-1)(x-b), \ b \ne 0, \pm 1$ 
\item[$\cdot$]$\mathbb C(x,y)$ : the function field of $E$ 
\item[$\cdot$]$(X:Y:Z)$ : homogeneous coordinates such that $x=X/Z$ and $y=Y/Z$ 
\item[$\cdot$]$K^G$ : the invariant subfield of a field $K$ by a group $G$ action
\item[$\cdot$]$\mathrm{div}(t)$ : the divisor of $t \in \mathbb C(x,y)$ on $E$ 
\end{namelist}

\medskip

The following Remark is useful to find the examples. 

\begin{remark}\label{12}
Let $G$ be the group in Theorem \ref{4} and suppose $\mathbb C(x,y)^G=\mathbb C(s)$. 
Then, taking an affine coordinate $s$, we have a morphism $p:E \longrightarrow E/G \cong \mathbb P^1$. 
Let $D$ be the polar divisor of $s$ on E.  
Next, find an element $t \in \mathbb C(x,y)$ satisfying that $\mathrm{div}(t)+D \geq 0$ and
 $\mathbb C(x,y)=\mathbb C(s,t)$. Then, it is not difficult to see 
that the curve $C$ defined by $s$ and $t$ has the Galois point at $\infty$ with the Galois group $G$. 
\end{remark}

The following examples are listed according to the structure of $G$, where we use the notation $\omega=e_3$ and $i=e_4$. 

\medskip

(I) The case where $G_T$ is trivial   

\begin{example}\label{13}
$G \cong {Z_3}$

Take the plane curve $y^2=x^3+1$. 
Let $G=\langle\sigma^*\rangle$ where $\sigma^*(x)=\omega x$, $\sigma^*(y)=y$. 
Clearly we have $\mathbb C(x,y)^G=\mathbb C(y)$ and 
 the Galois extension $\mathbb C(x,y)/\mathbb C(y)$. 
We can find two more Galois points $(0, \sqrt{-3})$ and $(0, -\sqrt{-3})$ for this curve. 
Further, this is the only smooth plane elliptic curve with a Galois point (cf. \cite{y1}).
\end{example}

\begin{example}\label{14}
$G \cong {Z_4}$

Take the plane curve $y^2=x^3+x$. 
Let $G=\langle\sigma^*\rangle$ where $\sigma^*(x)=-x$ and $\sigma^*(y)=i y$. 
Clearly we have $\mathbb C(x,y)^G=\mathbb C(x^2)$. 
Putting $s=x^2$, we have the Galois extension $\mathbb C(x,y)=\mathbb C(s,y)/\mathbb C(s)$. 
Thus we have the defining equation $y^4=s(s+1)^2$.
\end{example}

\begin{example}\label{15}
$G \cong Z_6$

Take the plane curve $y^2=x^3+1$. 
Let $G=\langle\sigma^*\rangle$ where $\sigma^*(x)=\omega x$ and $\sigma^*(y)=-y$. 
Clearly we have $\mathbb C(x,y)^G=\mathbb C(x^3)$. 
Putting $s=x^3$ and $t=xy$, we have the Galois extension $\mathbb C(x,y)=\mathbb C (s,t)/\mathbb C(s)$. 
Thus we have defining the equation $t^6=s^2(s+1)^3$.
\end{example}

Next, we find the examples where $G_T$ is not trivial. 
To represent the translation $\tau$ as the automorphism of $\mathbb C(x, y)$ we make use of the addition on $E$. 
Denote by $\star$ the addition taking the zero at the unique $\infty$. 

\medskip

(II) The case where $G_T$ has one generator

\begin{example}\label{16}
$G \cong {Z_2}^{\oplus2}$ 

In this case we take the other standard form of the elliptic curve. Let $E : y^2=x(x-1)(x-b)$ be the one, where $b \ne 0, 1$. 
Take the automorphisms $\sigma$ and $\tau$ of $E$ such that their orders are two and $\sigma$ fixes the zero and $\tau$ is a 
translation.  
The point $(b,0) \in E$ is of order two (concerning the addition on $E$) and 
we have $(x,y) \star (b,0)=(b(x-1)/(x-b), \ b(b-1)y/(x-b)^2)$.  
Hence the translation $\tau$ can be represented as $\tau^*(x)=b(x-1)/(x-b)$ and 
$\tau^*(y)=b(b-1)y/(x-b)^2)$. 
Let $G=\langle\sigma^*,\tau^*\rangle$, where $\sigma^*(x)=x$, $\sigma^*(y)=-y$. 
Clearly $x+(b(x-1))/(x-b)=(x^2-b)/(x-b)$ is invariant by $\tau^*$, so put $s=(x^2-b)/(x-b)$. 
Let $Q_1$ and $Q_2$ be the points $(b:0:1)$ and $(0:1:0)$ on $E$ respectively. Then put $D=2Q_1+2Q_2$ as a divisor. 
 It is easy to see that the polar divisor of $(x^2-b)/(x-b)$ is 
 $D$. Putting $t=(y+a)/(x-b)$, where $a \ne 0, \pm1$, we have $\mathrm{div}(t)+D \geq 0$. 
Using the relations $s=(x^2-b)/(x-b), \ t=(y+a)/(x-b)$ and $y^2=x(x-1)(x-b)$, we infer by a rather long computations  
that $\mathbb C(s,t) \ni x$ if $a \ne 0, \pm1$. Therefore we have $\mathbb C(x,y)=\mathbb C(s,t)$. 
Thus the defining equation is computed as 

\noindent $
a^4 + a^3(4b - 2s)t + ab t(-4b + 4b^2 + 2s + 2b s - 4 b^2 s -
 2s^2 + 2b s^2 - 4b t^2 + 4 b^2 t^2 + 2st^2 - 2 b st^2) + 
a^2(2b + 2b^2 - 6b s - 2b^2 s + s^2 + 4b s^2 - s^3 - 2b t^2 + 6b^2 t^2 - 4b st^2 +
s^2t^2) = b^2 (-1 + 2b - b^2 + 2s - 4b s + 2b^2 s - s^2 + 2b s^2 - b^2 s^2 - 2t^2 + 
4b t^2 - 2b^2 t^2 + 2s t^2 - 4b st^2 + 2b^2 st^2 - t^4 + 2b t^4 - b^2 t^4). $

\end{example}

\begin{example}\label{17}
$G \cong Z_2\oplus Z_4$

Take the plane curve $E : y^2=x^3+x$. 
The point $(0,0) \in E$ is of order two and 
we have $(x,y) \star (0,0)=(1/x, -y/x^2)$.  
Then the translation $\tau$ of order two can be expressed as $\tau^*(x)=1/x$ and 
$\tau^*(y)=-y/x^2$. 
Let $G=\langle\sigma^*,\tau^*\rangle$ where $\sigma^*(x)=-x$, $\sigma^*(y)=iy$, 
$\tau^*(x)=1/x$ and $\tau^*(y)=-y/x^2$.
Since $\tau^* (y/x)=-y/x$, we have $\mathbb C(x,y)^G\ni{y^4}/{x^4}$.
Therefore we obtain $\mathbb C(x,y)=\mathbb C (y^4/x^4, y)$. 
In order to check this, first put $u=y^2/x^2$, then we have $\mathbb C(u,y)\ni x^2$ 
and $\mathbb C(u,y) \ni u=y^2/x^2=(x(x^2+1))/x^2$, so we have $\mathbb C(u,y)\ni x$ and 
$\mathbb C(u,y)=\mathbb C(x,y)$. 
Further, since $x^2=y^2/u$ and $y^4=x^2(x^2+1)^2$, we have 
$u^3y^2=(y^2+u)^2$.
Then we obtain $u^2y^2=(y^2+u)^2/u=(y^4+2y^2u+u^2)/u=((y^2)^2+u^2)/u + 2y^2$.  
So we conclude $\mathbb C(u^2,y)\ni u$. Note that the polar divisor of $u^2$ is $D=4(0:0:1)+4(0:1:0)$ 
and $\mathrm{div}(y)+D \geq 0$.  
Putting $s=u^2$, we have the Galois extension $\mathbb C(x,y)=\mathbb C(y,s)/\mathbb C(s)$.
 Thus we have the defining equation $s(s-2)^2y^4=(y^4+s)^2$.
\end{example}

\begin{example}\label{18}
$G \cong {Z_3}^{\oplus2}$

Take the plane curve $E : y^2=x^3+1$. 
The point $(0,1) \in E$ is of order three and 
we have $(x,y) \star (0,1)=((2-2y)/x^2,\ (y-3)/(y+1))$.  
Hence a translation $\tau$ of order three can be expressed as $\tau^*(x)=(2-2y)/x^2$ and 
$\tau^*(y)=(y-3)/(y+1)$. 
Let $G=\langle\sigma^*,\tau^*\rangle$ where $\sigma^*(x)=\omega x$, $\sigma^*(y)=y$,
$\tau^*(x)=(2-2y)/x^2$ and $\tau^*(y)=(y-3)/(y+1)$.
 We have $\mathbb C(x,y)^{\tau^*}\ni y\tau^*(y){\tau^*}^2(y)=-y(y^2-9)/(y^2-1)$.
Putting $s=-y(y^2-9)/(y^2-1)$, we have the Galois extension $\mathbb C(x,y)=\mathbb C(x,s)/\mathbb C(s)$.
 Note that the polar divisor of $s$ is $D=3(0:1:0)+3(0:1:1)+3(0:-1:1)$ and we have $\mathrm{div}(x) +D \geq 0$. 
 Thus we have the defining equation $s^2x^6=(x^3+1)(x^3-8)^2$.
\end{example}

(III) The case where $G_T$ needs two generators. 

\begin{example}\label{19}
$G \cong {Z_2}^{\oplus3}$

Take the plane curve $y^2=x^3+x$ and two translations $\tau$ and $\tau'$ of order two. 
Let $G=\langle\sigma^*,\tau^*,{\tau'}^* \rangle$ where $\sigma^*(x)=x$, $\sigma^*(y)=-y$, 
$\tau^*(x)=1/x$, $\tau^*(y)=-y/x^2$, 
${\tau'}^*(x)=(i(x+i))/(x-i)$ and ${\tau'}^*(y)=(2y)/(x-i)^2$.
Since $x+\tau^*(x)+{\tau'}^*(x)+\tau^*{\tau'}^*(x)=(x^2-1)^2/y^2$ 
is invariant by $\tau^*, \ {\tau'}^*$ and $\sigma^*$, we have $\mathbb C(x,y)^G \ni (x^2-1)^2/y^2$.
Putting $u=(x^2-1)/y$ and $s=u^2$, we have $\mathbb C(y,u)=\mathbb C(x,y)$.
Further we have $y^4=x^2(x^2+1)^2=(uy+1)(uy+2)^2=u(sy^3+8y)+5sy^2+4$. So we have $\mathbb C(y, u)=\mathbb C(y,s)$ and 
 $(y^4-5sy^2-4)^2=sy^2(sy^2+8)^2$. 
Note that the polar divisor of $s$ is $D=2(0:1:0)+2(0:0:1)+2(i:0:1)+2(-i:0:1)$, so putting $t=1/y$, we have 
$\mathrm{div}(t)+D \geq 0$ and $\mathbb C(x,y)=\mathbb C(s,t)$.  
We obtain $(4t^4+5st^2-1)^2=st^2(s+8t^2)^2$. 
This equation gives the Galois extension $\mathbb C(s,t)/\mathbb C(s)$
\end{example}

Thus we have given the examples of genus-one curves with Galois points whose Galois group are abelian. 
Finally we raise a problem. 
We have shown in \cite{y1} (resp. \cite{y3}) the number of Galois points for a smooth plane curve (resp. genus-one curve of 
degree four) is at most three (resp. one). 
So we ask the following question.

\medskip

\noindent {\bf Question.}  Find all the Galois points for genus-one curves. In particular, is the number of 
them at most three? Then, characterize the curve with the maximal number.  

\bigskip

\noindent {\bf Acknowledgement.} 
The authors express sincere thanks for Dr. Fukasawa who gave a suggestion of the proof of Claim \ref{2}.

\bibliographystyle{amsplain}

\begin{thebibliography}{99}

\bibitem{f}S.\ Fukasawa,
Galois points for a plane curve in arbitrary characteristic,
{\em Geom. Dedicata}. {\bf 139} (2009), 211--218.  

\bibitem{s}Joseph H.\ Silverman,
The arithmetic of Elliptic Curves, 
{\em Graduate Studies in Math.} {\bf 106} (1985), Springer-Verlag 

\bibitem{m}K.\ Miura,
Galois points on singular plane quartic curves, 
{\em J. \ Algebra}, {\bf 287} (2005), 283--294.

\bibitem{my}K.\ Miura and H.\ Yoshihara,
Field theory for function fields of plane curves, 
{\em J. \ Algebra}, {\bf 226} (2000), 283--294.

\bibitem{y1}H.\ Yoshihara,
Function field theory of plane curves by dual curves,
{\em J.\ Algebra}, {\bf 239} (2001), 340--355.

\bibitem{y4}\bysame,
Galois embedding of algebraic variety and its application to abelian surface. 
{\em Rend. Sem. Mat. Univ. Padova}, {\bf 117} (2007), 69--85.

\bibitem{y2}\bysame,
Rational curve with Galois point and extendable Galois automorphism,
{\em J.\ Algebra}, {\bf 321} (2009), 1463--1472.

\bibitem{y3}\bysame,
Galois lines for normal elliptic space curves, II, 
arXiv:1004.4962v1[math.AG] {\em Algebra Colloquium} to appear.


\end{thebibliography}

\end{document}